\newtheorem{lemma}{Lemma}
\newtheorem{thm}{Theorem}
\newtheorem{cor}{Corollary}
\newtheorem{rmk}{Remark}
\newcommand{\op}[1]{\operatorname{\text{\rm #1}}}
\begin{document}

\title{Regularity of minimal hypersurfaces with a common free boundary}
\author{Brian Krummel}
\maketitle

\begin{abstract} 
Let $N$ be a Riemannian manifold and consider a stationary union of three or more $C^{1,\mu}$ hypersurfaces-with-boundary $M_k \subset N$ with a common boundary $\Gamma$.  We show that if $N$ is smooth, then $\Gamma$ is smooth and each $M_k$ is smooth up to $\Gamma$ (real analytic in the case $N$ is real analytic).  Consequently we strengthen a result of Wickramasekera in~\cite{Wic} to conclude that under the stronger hypothesis that $V$ is a stationary, stable, integral $n$-varifold in an $(n+1)$-dimensional, smooth (real analytic) Riemannian manifold such that the support of $\|V\|$ is nowhere locally the union of three or more smooth (real analytic) hypersurfaces-with-boundary meeting along a common boundary, the singular set of $V$ is empty if $n \leq 6$, is discrete if $n = 7$, and has Hausdorff dimension at most $n-7$ if $n \geq 8$. 
\end{abstract}

\section{Introduction}

We consider the regularity of integral $n$-varifolds of an $(n+1)$-dimensional, smooth, Riemannian manifold $(N,g)$ of the form 
\begin{equation} \label{V_form}
	V = \sum_{k=1}^q \theta_k |M_k| 
\end{equation}
consisting of distinct $C^1$ embedded hypersurfaces-with-boundary $M_1,M_2,\ldots,M_q$ with a common boundary $\Gamma$ and with respective positive multiplicities $\theta_1,\theta_2,\ldots,\theta_q$.  Here $|M_k|$ denotes the multiplicity one varifold associated with the hypersurface $M_k$ and the sum in (\ref{V_form}) is taken by regarding varifolds as Radon measures on the Grassmanian of $N$~\cite[Section 38]{GMT}.  Thus $V$ is the integral varifold supported on $M_1 \cup M_2 \cup \cdots \cup M_q$ with multiplicity $\sum_{k \text{ with } X \in M_k} \theta_k$ at $\mathcal{H}^n$-a.e. $X \in M_1 \cup M_2 \cup \cdots \cup M_q$.  To each integral $n$-varifold $V$ we associate a Radon measure $\|V\|$ such that for any Borel set $A \subseteq N$, $\|V\|(A)$ represents the $n$-dimensional area of $V$ in $A$ (see~\cite[Section 15]{GMT} noting that $\|V\| = \mu_V$).  When $V$ is of the form (\ref{V_form}), 
\begin{equation*}
	\|V\|(A) = \sum_{k=1}^q \theta_k \mathcal{H}^n(M_k \cap A) 
\end{equation*}
for every Borel set $A \subseteq N$, where $\mathcal{H}^n$ denotes the $n$-dimensional Hausdorff measure.  Let $\zeta \in C_c^1(N;TN)$ be an arbitrary vector field on $N$ and let $f_t : N \rightarrow N$, $t \in (-1,1)$, be the one-parameter family of diffeomorphisms on $N$ generated by $\zeta$.  The first variation of area $\delta V : C_c^1(N;TN) \rightarrow \mathbb{R}$ of an integral $n$-varifold $V$ is the linear functional given by 
\begin{equation*}
	\delta V(\zeta) = \left. \frac{d}{dt} \|f_{t \#} V\|(\op{spt} \zeta) \right|_{t=0} = \int_M \op{div}_{T_X} \zeta(X) d\|V\|(X)
\end{equation*}
for every vector field $\zeta \in C_c^1(N;TN)$~\cite[Section 16]{GMT}, where $f_{t \#} V$ denote the image or pushforward of $V$ under the diffeomorphisms $f_t$, $T_X$ is the approximate tangent plane to $V$ at a.e. $X \in \op{spt} \|V\|$, and 
\begin{equation*}
	\op{div}_{T_X} \zeta(X) = \sum_{i=1}^n D_{\tau_i} \zeta(X) \cdot \tau_i
\end{equation*}
for any orthonormal basis $\tau_1,\tau_2,\ldots,\tau_n$ of $T_X$.  Suppose $V$ is of the form (\ref{V_form}) and each $M_k$ is $C^2$ on its interior.  Then 
\begin{equation*}
	f_{t \#} V = \sum_{k=1}^q \theta_k |f_t(M_k)| \hspace{3mm} \text{and} \hspace{3mm} 
	\|f_{t \#} V\|(A) = \sum_{k=1}^q \theta_k \mathcal{H}^n(f_t(M_k) \cap A) \text{ for all Borel sets } A \subseteq N,
\end{equation*}
where $f_t$ are the diffeomorphisms generated by $\zeta$ from above.  Thus by the divergence theorem, 
\begin{equation*}
	\delta V(\zeta) = \sum_{k=1}^q \int_{M_k} \op{div}_{T_X M_k} \zeta(X) \theta_k d\mathcal{H}^n(X) 
	= \int_{\Gamma} \sum_{k=1}^q \theta_k \eta_k \cdot \zeta - \sum_{k=1}^q \int_{M_k} H_k \cdot \zeta \theta_k d\mathcal{H}^n
\end{equation*}
for every vector field $\zeta \in C_c^1(N;TN)$, where $T_X M_k$ denotes the tangent plane to $M_k$ at $X \in M_k$, $H_k$ is the mean curvature vector to $M_k$, and $\eta_k$ is unit vector field along $\Gamma$ that is tangent to $M_k$, orthogonal to $\Gamma$, and points outward from $M_k$.  In particular, $V$ is stationary, i.e. $\delta V(\zeta) = 0$ for all $\zeta \in C_c^1(N;TN)$, if and only if $H_k = 0$ on $M_k$ for all $k = 1,2,\ldots,q$ and 
\begin{equation} \label{mainthm_tangents}
	\sum_{k=1}^q \theta_k \eta_k = 0 \text{ on } \Gamma. 
\end{equation}
Our main result is the following regularity theorem for stationary $n$-varifolds $V$ of the form (\ref{V_form}).

\begin{thm} \label{mainthm}
Let $(N,g)$ be an $(n+1)$-dimensional, smooth (real analytic), Riemannian manifold.  Let $Z \in N$, $\mathcal{O}$ be an open neighborhood of $Z$ in $N$, and $\mu \in (0,1)$.  Let $q \geq 3$ and consider the stationary integral $n$-varifold $V$ of the form (\ref{V_form}) for distinct $C^{1,\mu}$ embedded hypersurfaces-with-boundary $M_1,M_2,\ldots,M_q$ of $\mathcal{O}$ that have a common boundary $\Gamma$ with $Z \in \Gamma$ and for positive integers $\theta_1,\theta_2,\ldots,\theta_q$.  Assume the hypersurfaces $M_k$ are not all tangent to the same hyperplane at $Z$.  Then for some open neighborhood $\mathcal{O}'$ of $Z$ in $\mathcal{O}$, $M_k$ are smooth (real analytic) hypersurfaces-with-boundary of $\mathcal{O}'$ and $\Gamma$ is a smooth (real analytic) $(n-1)$-dimensional submanifold of $\mathcal{O}'$. 
\end{thm}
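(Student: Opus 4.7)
The plan is to reduce the regularity question to an elliptic boundary-value problem for a coupled system of $q$ nonlinear PDEs on a fixed domain, and then apply the Agmon–Douglis–Nirenberg regularity theory for elliptic systems. Since each $M_k$ is $C^{1,\mu}$, each has a well-defined tangent half-hyperplane $H_k$ at $Z$ with common edge $T_Z\Gamma$, and passing to the limit in \eqref{mainthm_tangents} gives the balancing relation $\sum_{k=1}^q \theta_k \nu_k = 0$ among the unit conormals $\nu_k$ of the $H_k$ along $T_Z\Gamma$. The hypothesis that not all $M_k$ are tangent to the same hyperplane at $Z$ translates into the statement that the $\nu_k$ span the $2$-plane $(T_Z\Gamma)^\perp$ in a genuinely non-degenerate way — this will be the analytic non-degeneracy that drives the entire argument.

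First I would choose normal coordinates for $(N,g)$ at $Z$ so that $T_Z\Gamma$ is the standard $\mathbb{R}^{n-1}\times\{0\}\times\{0\}$, parameterize each $M_k$ as a small $C^{1,\mu}$ graph $u_k$ over $H_k$ with $u_k(0)=0$, $Du_k(0)=0$, and parameterize $\Gamma$ as a $C^{1,\mu}$ graph $\psi \colon \mathbb{R}^{n-1}\to\mathbb{R}^2$ with $\psi(0)=0$, $D\psi(0)=0$. I would then perform a hodograph / partial Legendre type change of variables that maps $\Gamma$ to the fixed $(n-1)$-plane $\mathbb{R}^{n-1}\times\{0\}\times\{0\}$; in the new variables each $M_k$ is a graph over the fixed half-plane $H_k$, the minimality condition $H_k=0$ becomes a quasilinear uniformly elliptic equation $F_k(x,u_k,Du_k,D^2u_k)=0$ with coefficients depending on $g$ and on the derivatives of $\psi$, and the balancing condition \eqref{mainthm_tangents} becomes a single boundary relation on $\partial H_k$ coupling the first-order traces of all $u_1,\dots,u_q$ through the formulas for the $\nu_k$.

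The heart of the matter is then to verify that the linearisation of this coupled problem about the flat tangent configuration satisfies the Lopatinski–Shapiro complementing condition relative to the $q$ scalar minimal-surface operators. Once that is in hand, Schauder theory for elliptic systems with coupled oblique boundary conditions promotes $C^{1,\mu}$ to $C^{2,\alpha}$ regularity for each $u_k$ and for $\psi$, and a standard bootstrap gives smoothness; in the real analytic case, the analyticity theorem of Morrey for nonlinear elliptic systems with analytic coupled boundary conditions gives real analyticity. The main obstacle I anticipate is the verification of the complementing condition: one must compute the principal boundary symbol carefully, use that the $\nu_k$ are non-coplanar and that $\sum\theta_k\nu_k=0$, and show that the only bounded half-space solutions of the linearised problem with zero data are constants — this is exactly where the $q\geq 3$ and non-tangentiality hypotheses are used essentially, and where the case of two smooth sheets (which could meet tangentially and is genuinely non-regular in general) is excluded.
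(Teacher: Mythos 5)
Your overall strategy coincides with the paper's: represent the sheets as graphs, flatten the free boundary by a partial hodograph--Legendre change of variables in the style of Kinderlehrer--Nirenberg--Spruck, and invoke Agmon--Douglis--Nirenberg/Morrey regularity for coercive elliptic systems. But the proposal stops exactly where the proof begins: the verification of the complementing condition is the entire mathematical content of the theorem, and you only announce it. Moreover your boundary-condition bookkeeping is off in a way that matters for that verification. The balancing relation (\ref{mainthm_tangents}) is an identity in the two-plane $(T\Gamma)^{\perp}$, hence \emph{two} scalar boundary conditions, not ``a single boundary relation''; and the transformed system of $q$ second-order unknowns requires exactly $m=q$ scalar boundary conditions for coercivity. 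The missing $q-2$ come from the coincidence conditions $u_2=\cdots=u_q$ on $\gamma$ (one of the $q-1$ coincidences being absorbed into the definition of the transform); your outline never mentions these, and without them the boundary value problem is underdetermined and cannot satisfy the complementing condition. Relatedly, the nondegeneracy is not that the conormals are ``non-coplanar'' --- they all lie in the two-plane $(T_Z\Gamma)^{\perp}$ --- but that they are not all contained in a single line, i.e.\ the tangent half-hyperplanes do not all lie in one hyperplane.

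As for the verification itself: the paper writes all sheets as graphs $u_k$ over a single hyperplane (over $\Omega_+$ for $k\le s$ and $\Omega_-$ for $k>s$, arranging $s\ge 2$ and $D_{x_n}u_1(0)>D_{x_n}u_2(0)$ via the non-tangency hypothesis) and takes $y_n=u_1(x)-u_2(x)$ as the new coordinate; this specific choice simultaneously flattens $\gamma$ (since $u_1=u_2$ there) and is invertible (since $D_{x_n}(u_1-u_2)(0)\neq 0$). Substituting the exponential ansatz $c_k e^{i\xi'\cdot y'-\lambda_k y_n}$ into the linearized interior equations fixes each $\lambda_k$ with $\operatorname{Re}\lambda_k>0$, the linearized coincidence conditions express $c_3,\dots,c_q$ as affine combinations of $c_1,c_2$, and the two linearized balancing conditions become a $2\times 2$ system in $(c_1,c_2)$ whose determinant equals $a_1-a_2$ times
\begin{equation*}
\Bigl(\sum_{k=1}^{q}\tfrac{\theta_k}{1+a_k^2}\Bigr)\Bigl(\sum_{k=1}^{q}\tfrac{\theta_k a_k^2}{1+a_k^2}\Bigr)-\Bigl(\sum_{k=1}^{q}\tfrac{\theta_k a_k}{1+a_k^2}\Bigr)^{2},
\end{equation*}
which is strictly positive by Cauchy--Schwarz unless all $a_k=D_{x_n}u_k(0)$ coincide --- precisely what the non-tangency hypothesis rules out. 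This forces $c_1=c_2=0$, hence all $c_k=0$, establishing coercivity. That computation, not the general elliptic framework, is the theorem; until it is carried out the proposal is an outline rather than a proof.
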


\begin{rmk} 
Note that Theorem \ref{mainthm} allows for the hypersurfaces $M_k$ to intersect away from $\Gamma$.  If $\overline{M_k} \cap \overline{M_l} \cap \mathcal{O} = \Gamma$ whenever $k \neq l$, then the hypersurfaces $M_k$ cannot all be tangent to the same hyperplane at $Z$ as a standard consequence of the Hopf boundary point lemma~\cite[Lemma 7]{FG}.
\end{rmk}

\begin{rmk} \label{remark2}
Theorem \ref{mainthm} continues to hold if we weaken the assumption that each $M_k$ is a $C^{1,\mu}$ hypersurface-with-boundary to the assumption that each $M_k$ is a $C^1$ hypersurface-with-boundary since then each $M_k$ is automatically a $C^{1,\mu}$ hypersurface-with-boundary for all $\mu \in (0,1/2)$ as a consequence of the proof of the Minimum Distance Theorem of~\cite{Wic} (see the appendix Section \ref{sec:appendix}).  Note that this requires the assumption that the hypersurfaces $M_k$ are not all tangent to the same hyperplane along $\Gamma$. 
\end{rmk}

An important consequence of Theorem \ref{mainthm} is that we can strengthen the Regularity and Compactness Theorem of Wickramasekera in~\cite{Wic}.  Suppose $V$ is a stationary integral $n$-varifold in $N$.  We let $\op{reg} V$ denote the set of points $X_0 \in \op{spt} \|V\|$ such that for some open neighborhood $\mathcal{O}$ of $X_0$ in $N$, $\op{spt} \|V\| \cap \mathcal{O}$ is an smooth embedded hypersurface of $\mathcal{O}$.  We let $\op{sing} V = \op{spt} \|V\| \setminus \op{reg} V$.  We say $\op{reg} V$ is stable if 
\begin{equation*}
	\int_{\op{reg} V} (|A|^2 + \op{Ric}(\nu,\nu)) \varphi^2 \leq \int_{\op{reg} V} |\nabla \varphi|^2 
\end{equation*}
for every $\varphi \in C_c^1(\op{reg} V;\mathbb{R})$, where $|A|$ is the norm of the second fundamental form of $\op{reg} V$, $\op{Ric}$ is the Ricci curvature tensor on $N$, $\nu(X) \in T_X N$ is a unit normal vector to $\op{reg} V$ at each $X \in \op{reg} V$, and $\nabla$ is the gradient on $\op{reg} V$.  The Regularity and Compactness Theorem of~\cite{Wic} states that for $\mu \in (0,1)$, if $V$ is an integral $n$-varifold in an $(n+1)$-dimensional, smooth, Riemannian manifold $(N,g)$ such that $V$ is stationary, $\op{reg} V$ is stable, and there is no open set $\mathcal{O}$ in $N$ for which $\op{spt} \|V\| \cap \mathcal{O}$ is the union of a finite number of three or more $C^{1,\mu}$ embedded hypersurfaces-with-boundary of $\mathcal{O}$ that have a common boundary $\Gamma$ and that do not intersect except along $\Gamma$, then $\op{sing} V = \emptyset$ if $n \leq 6$, $\op{sing} V$ is discrete if $n = 7$, and $\dim \op{sing} V \leq n-7$ if $n \geq 8$.

\begin{cor}
Let $(N,g)$ be an $(n+1)$-dimensional, smooth (real analytic), Riemannian manifold.  Suppose $V$ be an integral $n$-varifold in $N$ such that $V$ is stationary, $\op{reg} V$ is stable, and there is no open set $\mathcal{O}$ in $N$ such that $\op{spt} \|V\| \cap \mathcal{O}$ is the union of a finite number of three or more smooth (real analytic) embedded hypersurfaces-with-boundary of $\mathcal{O}$ that have a common boundary $\Gamma$ and that do not intersect except along $\Gamma$.  Then $\op{sing} V = \emptyset$ if $n \leq 6$, $\op{sing} V$ is discrete if $n = 7$, and $\dim \op{sing} V \leq n-7$ if $n \geq 8$. 
\end{cor}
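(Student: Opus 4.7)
The plan is to derive the corollary as an immediate consequence of Theorem \ref{mainthm} combined with the Regularity and Compactness Theorem of \cite{Wic}. The key observation is that Theorem \ref{mainthm} lets us upgrade any $C^{1,\mu}$ branch configuration to a smooth (real analytic) one, so the ostensibly stronger hypothesis of Wickramasekera's theorem ($C^{1,\mu}$) becomes equivalent, under our standing hypotheses, to the smooth (real analytic) hypothesis in the corollary.

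More precisely, I would proceed by contradiction to verify the hypothesis of Wickramasekera's theorem. Suppose that there exists an open set $\mathcal{O} \subseteq N$ such that $\op{spt} \|V\| \cap \mathcal{O}$ is the union of three or more $C^{1,\mu}$ embedded hypersurfaces-with-boundary $M_1,\ldots,M_q$ of $\mathcal{O}$ with a common boundary $\Gamma$, not intersecting except along $\Gamma$. Pick any $Z \in \Gamma$. Since the $M_k$ do not intersect except along $\Gamma$, the Hopf boundary point lemma (as invoked in the first remark after Theorem \ref{mainthm}) implies that the hypersurfaces $M_k$ cannot all be tangent to the same hyperplane at $Z$. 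Also, $V$ restricts to a stationary integral $n$-varifold on $\mathcal{O}$ of the form \eqref{V_form} (with the multiplicities $\theta_k$ read off from $\|V\|$ on each $M_k$). Thus all hypotheses of Theorem \ref{mainthm} are met.

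Applying Theorem \ref{mainthm} yields an open neighborhood $\mathcal{O}' \subseteq \mathcal{O}$ of $Z$ on which each $M_k$ is a smooth (real analytic) embedded hypersurface-with-boundary and $\Gamma$ is a smooth (real analytic) $(n-1)$-dimensional submanifold. After shrinking $\mathcal{O}'$ if necessary so that $\op{spt} \|V\| \cap \mathcal{O}' = \bigcup_{k=1}^q M_k \cap \mathcal{O}'$ (which is possible since $\mathcal{O}' \subseteq \mathcal{O}$), we obtain an open set in which $\op{spt} \|V\|$ is a union of three or more smooth (real analytic) hypersurfaces-with-boundary meeting only along a common smooth (real analytic) boundary. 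This directly contradicts the hypothesis of the corollary. Hence no such $\mathcal{O}$ exists, and Wickramasekera's theorem applies to $V$, giving the asserted bounds on $\op{sing} V$.

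The argument is essentially a bookkeeping step, so I do not anticipate any serious obstacle; the only point that deserves verification is that the restriction of $V$ to a neighborhood of $Z$ really does fit into the framework of Theorem \ref{mainthm} (i.e., that the multiplicities are positive integers and the common-boundary stationarity condition \eqref{mainthm_tangents} holds), but both are immediate from the assumption that $\op{spt}\|V\|\cap \mathcal{O}$ decomposes as the stated union and that $V$ is stationary.
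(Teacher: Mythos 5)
Your argument is correct and is exactly the intended derivation: the paper presents the corollary as an immediate consequence of Theorem \ref{mainthm} upgrading any $C^{1,\mu}$ branch configuration (which is never all-tangent, by the Hopf lemma remark) to a smooth (real analytic) one, so that the hypothesis of Wickramasekera's Regularity and Compactness Theorem is verified by contradiction. No issues.
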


Theorem \ref{mainthm} can be regarded as analogous to Theorem 5.1 of Kinderlehrer, Nirenberg, and Spruck in~\cite{KNS}, which shows that if $M_1$, $M_2$, and $M_3$ are $C^{1,\mu}$ embedded minimal hypersurfaces with common boundary $\Gamma$ meeting at constant nonzero angles along $\Gamma$, then $M_1$, $M_2$, and $M_3$ are real analytic up to the boundary $\Gamma$ and $\Gamma$ is a real analytic $(n-1)$-dimensional submanifold.  This includes the special case of Theorem \ref{mainthm} where $q = 3$ since if $V$ is stationary then $M_1$, $M_2$, and $M_3$ meet at constant angles along $\Gamma$.  Our approach allows us to prove Theorem \ref{mainthm} for all $q \geq 3$.

The proof of Theorem \ref{mainthm} involves representing the hypersurfaces $M_k$ as the graphs of functions $u_k$ that satisfy a particular free boundary problem.  We replace the boundary condition of~\cite{KNS} that $M_1$, $M_2$, and $M_3$ meet at constant angles along $\Gamma$ with the condition that (\ref{mainthm_tangents}) holds true.  We then transform the free boundary problem using the partial Legendre transform as in~\cite{KNS} and then apply the elliptic regularity theory of Morrey (see~\cite{Morrey}).  The main challenge is verifying that the transformed differential system is coercive (i.e. satisfies the complementing condition of Morrey in~\cite{Morrey}).

Much like with Theorem 5.1 of~\cite{KNS}, Theorem \ref{mainthm} admits a number of extensions which follow from the same proof with slight modifications.  For example, we have the following result analogous to Theorem 5.2 of~\cite{KNS}:

\begin{thm} \label{thm2}
Let $(N,g)$ be an $(n+1)$-dimensional, smooth (real analytic), Riemannian manifold.  Let $Z \in N$, $\mathcal{O}$ be an open neighborhood of $Z$ in $N$, and $\mu \in (0,1)$.  Let $q \geq 3$ and $M_1,M_2,\ldots,M_q$ be distinct $C^{1,\mu}$ embedded hypersurfaces-with-boundary of $\mathcal{O}$ that have a common boundary $\Gamma$ with $Z \in \Gamma$.  Suppose each $M_k$ is $C^2$ on its interior and the mean curvature vector of $M_k$ equals $\Lambda_k(X) \nu_k(X)$ at every $X \in M_k$ for some smooth (real analytic) function $\Lambda_k : \mathcal{O} \rightarrow \mathbb{R}$ and continuous unit normal vector field $\nu_k$ on $M_k$.  Further suppose 
\begin{equation*}
	\sum_{k=1}^q \theta_k \eta_k = 0 \text{ on } \Gamma 
\end{equation*}
for some positive, smooth (real analytic) functions $\theta_k : \mathcal{O} \rightarrow \mathbb{R}_+$, where $\eta_k$ is the unit vector field on $\Gamma$ that is tangent to $M_k$, orthogonal to $\Gamma$, and points outward from $M_k$.  Assume the surfaces $M_k$ are not all tangent to the same hyperplane at $Z$.  Then for some open neighborhood $\mathcal{O}'$ of $Z$ in $\mathcal{O}$, $M_k$ are smooth (real analytic) hypersurfaces-with-boundary of $\mathcal{O}'$ and $\Gamma$ is a smooth (real analytic) $(n-1)$-dimensional submanifold of $\mathcal{O}'$. 
\end{thm}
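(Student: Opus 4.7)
The plan is to follow the proof of Theorem \ref{mainthm} essentially verbatim, noting that only the lower-order parts of the free boundary problem change in this more general setting. The two modifications are that the interior minimal surface equation $H_k = 0$ is replaced by the prescribed mean curvature equation $H_k = \Lambda_k \nu_k$ with smooth (real analytic) right-hand side, and that the constant multiplicities $\theta_k$ in the boundary balance condition are replaced by smooth (real analytic) positive functions $\theta_k(X)$.

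First I would choose coordinates near $Z$ in which each $M_k$ is represented as the graph of a function $u_k$ on a half-space $H \subset \mathbb{R}^n$, with $\Gamma$ projecting to $\partial H$ and with the $u_k$ agreeing on $\partial H$. The interior equations for the $u_k$ are then quasilinear elliptic equations with smooth (real analytic) coefficients and smooth (real analytic) right-hand sides, while the boundary conditions on $\partial H$ are the matching conditions $u_1 = \cdots = u_q$ together with the balance condition obtained by rewriting $\sum_k \theta_k \eta_k = 0$ in terms of $x$ and $Du_k$, whose coefficients are again smooth (real analytic).

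Next, as in the proof of Theorem \ref{mainthm}, I would apply the partial Legendre transform in a suitable tangential direction to convert the free boundary problem (with $\Gamma$ a priori unknown) into a fixed boundary value problem on a flat domain. The transformed system is again a quasilinear elliptic system, and its coefficients and boundary data are smooth (real analytic) functions of the new unknowns and their gradients. Crucially, the additional terms contributed by $\Lambda_k$ and by the dependence of $\theta_k$ on $X$ either are of lower order or enter as smooth (real analytic) variable coefficients, and so they do not affect the principal symbol of the linearized system at the boundary.

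The main obstacle, exactly as in Theorem \ref{mainthm}, is verifying Morrey's complementing condition for the transformed system. This is a pointwise algebraic condition on the principal symbol of the linearization at a boundary point, and in particular at the point corresponding to $Z$ it depends on $\theta_k$ only through the positive numbers $\theta_k(Z)$ and not on $\Lambda_k$ at all. Hence the verification reduces to the same algebraic computation carried out in the proof of Theorem \ref{mainthm}, which uses the hypothesis that the $M_k$ are not all tangent to the same hyperplane at $Z$. Once the complementing condition is established, Morrey's elliptic regularity theorem --- smooth or real analytic as appropriate --- yields regularity of the Legendre-transformed unknowns up to the flat boundary, and inverting the Legendre transform then gives the desired regularity of the $u_k$, and hence of the $M_k$ and of $\Gamma$.
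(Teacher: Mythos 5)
Your proposal is correct and matches the paper's intent exactly: the paper offers no separate proof of Theorem \ref{thm2}, stating only that it ``follows from the same proof with slight modifications,'' and you have correctly identified those modifications --- the prescribed mean curvature terms $\Lambda_k$ enter only as lower-order (smooth/real analytic) terms that leave the principal symbol unchanged, and the variable weights $\theta_k$ affect the complementing condition at $Z$ only through the positive values $\theta_k(Z)$, so the determinant/Cauchy--Schwarz argument goes through verbatim.
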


\section{Proof of Theorem \ref{mainthm}}

Let $Z \in N$.  Identify $T_Z N$ with $\mathbb{R}^{n+1}$ via a linear isometry and let $(x_1,x_2,\ldots,x_{n+1})$ denote the corresponding normal coordinates for $N$ at $Z$.  Let $\exp_Z : T_Z N \rightarrow N$ denote the exponential map of $N$ at $Z$.  Let 
\begin{equation*}
	g(x_1,x_2,\ldots,x_{n+1}) = \sum_{i,j=1}^{n+1} g_{i,j}(x_1,x_2,\ldots,x_{n+1}) dx_i \otimes dx_j
\end{equation*} 
denote the metric on $N$.  Since for every $X$ in a normal neighborhood of $Z$ in $N$, the pushforward of the exponential map $d(\exp_Z)_{\exp_Z^{-1}(X)} : \mathbb{R}^{n+1} \rightarrow T_X N$ is a linear isomorphism, we can identify $T_X N$ with $\mathbb{R}^{n+1}$ to let 
\begin{equation*}
	\|v\|_{g(x_1,x_2,\ldots,x_{n+1})} = \left( \sum_{i,j=1}^{n+1} g_{ij}(x_1,x_2,\ldots,x_{n+1}) v_i v_j \right)^{1/2}
\end{equation*} 
for every $v \in \mathbb{R}^{n+1}$. 

Let $\Omega$ be a connected open set in $\mathbb{R}^n$ such that $0 \in \Omega$ and the diameter of $\Omega$ is less than the injectivity radius of $N$ at $Z$.  Let $\gamma$ be an $(n-1)$-dimensional $C^{1,\mu}$ submanifold in $\Omega$ such that $0 \in \gamma$, $\gamma$ is tangent to $\mathbb{R}^{n-1} \times \{0\}$ at $0$, and $\Omega \setminus \gamma$ has exactly two connected components, $\Omega_+$ and $\Omega_-$.  Label $\Omega_+$ and $\Omega_-$ so that $(0,0,\ldots,0,1)$ points into $\Omega_+$ and out of $\Omega_-$ at the origin.  Let $s$ and $q$ be integers such that $q \geq 3$ and $1 \leq s < q$.  We consider the collection of hypersurfaces 
\begin{equation*}
	M_k = \exp_Z(\op{graph} u_k) 
\end{equation*}
for $u_k \in C^{1,\mu}(\Omega_+ \cup \gamma) \cap C^{\infty}(\Omega_+)$ for $k = 1,2,\ldots,s$ and 
\begin{equation*}
	M_k = \exp_Z(\op{graph} u_k) 
\end{equation*}
for $u_k \in C^{1,\mu}(\Omega_- \cup \gamma) \cap C^{\infty}(\Omega_-)$ for $k = s+1,s+2,\ldots,q$ such that $u_1,u_2,\ldots,u_q$ satisfy the following.  Assume $\max_{k=1,\ldots,s} \sup_{\Omega_+} |u_k|$ and $\max_{k=s+1,\ldots,q} \sup_{\Omega_-} |u_k|$ are small enough that each $M_k$ is properly defined and is contained in a normal neighborhood of $Z$ in $N$.  Each $u_k$ satisfies the minimal surface equation, 
\begin{align} \label{mse}
	&\sum_{i,j=1}^n D_{x_i} \left( \sqrt{\det G(x,u_k,Du_k)} \, G^{i,j}(x,u_k,Du_k) (g_{n+1,n+1}(x,u_k) D_{x_j} u_k + g_{j,n+1}(x,u_k)) \right) \\
		&- \frac{1}{2} \sum_{i,j=1}^{n+1} \sqrt{\det G(x,u_k,Du_k)} \, G^{i,j}(x,u_k,Du_k) \, D_z G_{i,j}(x,u_k,Du_k) = 0 
		\text{ in } \Omega_+ \text{ for } k = 1,2,\ldots,s, \nonumber \\
	&\sum_{i,j=1}^n D_{x_i} \left( \sqrt{\det G(x,u_k,Du_k)} \, G^{i,j}(x,u_k,Du_k) (g_{n+1,n+1}(x,u_k) D_{x_j} u_k + g_{j,n+1}(x,u_k)) \right) \nonumber \\
		&- \frac{1}{2} \sum_{i,j=1}^{n+1} \sqrt{\det G(x,u_k,Du_k)} \, G^{i,j}(x,u_k,Du_k) \, D_z G_{i,j}(x,u_k,Du_k) = 0 
		\text{ in } \Omega_- \text{ for } k = s+1,\ldots,q, \nonumber 
\end{align}
where $G(x,z,p) = (G_{i,j}(x,z,p))_{i,j=1,\ldots,n}$ is the $n \times n$ matrix given by 
\begin{equation*}
	G_{i,j}(x,z,p) = g_{i,j}(x,z) + g_{i,n+1}(x,z) p_j + g_{j,n+1}(x,z) p_i + g_{n+1,n+1}(x,z) p_i p_j 
\end{equation*}
for $(x,z) \in \mathbb{R}^n \times \mathbb{R}$ near $(0,0)$, $p \in \mathbb{R}^n$, and $i,j = 1,\ldots,n$ and $G(x,z,p)^{-1} = (G^{i,j}(x,z,p))_{i,j=1,\ldots,n}$.  Along $\gamma$, $u_1,u_2,\ldots,u_q$ satisfy 
\begin{equation} \label{coincide}
	u_1 = u_2 = \cdots = u_q \text{ on } \gamma  
\end{equation}
so that the $M_k$ have a common boundary $\Gamma = \exp_Z(\op{graph} u_1 |_{\gamma})$.  We assume that $M_1,M_2,\ldots,M_q$ satisfy (\ref{mainthm_tangents}) for some positive integers $\theta_1,\theta_2,\ldots,\theta_q$.  At the origin, we may assume that each $u_k$ satisfies 
\begin{equation} \label{gradatorigin}
	u_k(0) = 0, \hspace{10mm} D_{x_i} u_k(0) = 0 \text{ for } i = 1,\ldots,n-1,  
\end{equation}
for $k = 1,2,\ldots,q$.  If $Du_1(0) = Du_2(0) = \cdots = Du_s(0)$ and $Du_{s+1}(0) = Du_{s+2} = \cdots = Du_q(0)$, then $Du_1(0) = Du_q(0)$ by (\ref{mainthm_tangents}) and thus the hypersurfaces $M_k$ are all tangent to the same hyperplane at $Z$.  Therefore we may assume $s \geq 2$ and 
\begin{equation} \label{twoplaneassumption}
	D_{x_n} u_1(0) > D_{x_n} u_2(0) 
\end{equation}
so that the hypersurface $M_k$ are not all tangent to the same hyperplane at $Z$.

We want to express (\ref{mainthm_tangents}) in terms of differential equations.  By (\ref{gradatorigin}), we may assume 
\begin{equation} \label{invertible_chi}
	\text{the } n \times n \text{ matrix } (g_{i,j}(x,u_k(x)) + g_{j,n+1}(x,u_k(x)) D_{x_i} u_k(x))_{i,j=1,2,\ldots,n} \text{ is invertible}
\end{equation}
at each $x \in \gamma$ near the origin and $k = 1,2,\ldots,q$.  Define $\chi(x,z,p) = (\chi^1(x,z,p), \chi^2(x,z,p), \ldots, \chi^n(x,z,p),1)$ by letting $\chi^1(x,z,p)$, $\chi^2(x,z,p),\ldots,\chi^n(x,z,p)$ be the unique solutions to the linear system 
\begin{equation} \label{defn_chi}
	\sum_{j=1}^n (g_{i,j}(x,z) + g_{j,n+1}(x,z) p_i) \chi^j(x,z,p) = -g_{i,n+1}(x,z) - g_{n+1,n+1}(x,z) p_i \text{ for } i = 1,2,\ldots,n 
\end{equation}
whenever the $n \times n$ matrix $(g_{i,j}(x,z) + g_{j,n+1}(x,z) p_i)_{i,j=1,2,\ldots,n}$ is invertible so that \\ $d(\exp_Z)_{(x,u_k(x))} (\chi(x,u_k(x),Du_k(x)))$ is a normal vector to the graph of $u_k$ at $(x,u_k(x))$ for each $x \in \gamma$.  (In the special case $N = \mathbb{R}^n$, $\chi(x,u_k(x),Du_k(x)) = (-Du_k(x),1)$.)  By a rotation of $T_{\exp_Z(x,u_1(x))} N$ fixing $T_{\exp_Z(x,u_1(x))} \Gamma$ and rotating the orthogonal complement of $T_{\exp_Z(x,u_1(x))} \Gamma$ by $\pi/2$ radians, (\ref{mainthm_tangents}) is equivalent to 
\begin{equation} \label{tangents}
	\sum_{k=1}^s \theta_k \frac{\chi(x,u_k,Du_k)}{\|\chi(x,u_k,Du_k)\|_{g(x,u_k(x))}} 
	- \sum_{k=s+1}^q \theta_k \frac{\chi(x,u_k,Du_k)}{\|\chi(x,u_k,Du_k)\|_{g(x,u_k(x))}} = 0 
\end{equation}
at each $x \in \gamma$.  By (\ref{coincide}) and (\ref{twoplaneassumption}), for $x \in \gamma$ near the origin, each $\chi(x,u_k(x),Du_k(x))$ is in the span of $\chi(x,u_1(x),Du_1(x))$ and $\chi(x,u_2(x),Du_2(x))$ for $k = 1,\ldots,q$.  The span of $\chi(0,u_1(0),Du_1(0))$ and $\chi(0,u_2(0),Du_2(0))$ is $\{0\} \times \mathbb{R}^2$, so for $x \in \gamma$ near the origin the orthogonal projection of the span of $\chi(x,u_1(x),Du_1(x))$ and $\chi(x,u_2(x),Du_2(x))$ onto $\{0\} \times \mathbb{R}^2$ is bijective.  Thus by taking the $n$-th and $(n+1)$-th components of both sides of (\ref{tangents}), (\ref{tangents}) is equivalent to 
\begin{align} \label{tangents2}
	&\sum_{k=1}^s \theta_k \frac{\chi^n(x,u_k,Du_k)}{\|\chi(x,u_k,Du_k)\|_{g(x,u_k(x))}} 
		- \sum_{k=s+1}^q \theta_k \frac{\chi^n(x,u_k,Du_k)}{\|\chi(x,u_k,Du_k)\|_{g(x,u_k(x))}} = 0, \nonumber \\
	&\sum_{k=1}^s \theta_k \frac{1}{\|\chi(x,u_k,Du_k)\|_{g(x,u_k(x))}} 
		- \sum_{k=s+1}^q \theta_k \frac{1}{\|\chi(x,u_k,Du_k)\|_{g(x,u_k(x))}} = 0, 
\end{align}
at each $x \in \gamma$ near the origin.  By replacing $\Omega$ with a smaller neighborhood of the origin if necessary, assume (\ref{invertible_chi}) and (\ref{tangents2}) holds at every $x \in \gamma$.  

Now our goal is prove that for solutions $u_1,u_2,\ldots,u_q$ to the free boundary problem (\ref{mse}), (\ref{coincide}), and (\ref{tangents2}) satisfying the conditions (\ref{gradatorigin}) and (\ref{twoplaneassumption}) at the origin, $u_1,u_2,\ldots,u_q$ are smooth (real analytic) functions up to the boundary $\gamma$ and $\gamma$ is a smooth (real analytic) $(n-2)$-dimensional submanifold in $\Omega$.  We will use the partial Legendre transform of Kinderlehrer, Nirenberg, and Spuck~\cite{KNS}.  Let $w = u_1 - u_2$.  Consider the transformation $y_i = x_i$ for $i = 1,\ldots,n-1$ and $y_n = w(x)$ for $x \in \Omega_+ \cup \gamma$.  Let $U$ and $S$ denote the images of $\Omega_+$ and $\gamma$ respectively under this transformation and observe that $S \subseteq \{ y : y_n = 0\}$ by (\ref{coincide}).  By (\ref{twoplaneassumption}), $x \mapsto (x_1,\ldots,x_{n-1},w(x))$ is invertible near the origin and thus we may assume that $x \mapsto (x_1,\ldots,x_{n-1},w(x))$ is invertible on $\Omega \cup \gamma$.  The inverse transformation of $y_i = x_i$ for $i = 1,\ldots,n-1$ and $y_n = w(x)$ for $x \in \Omega_+ \cup \gamma$ is given by $x_i = y_i$ for $i = 1,\ldots,n-1$ and $x_n = \psi(y)$ for $y \in U \cup S$ for some function $\psi \in C^{1,\mu}(U \cup S) \cap C^{\infty}(U)$.  For $y \in U \cup S$, we have the tranformation $x_i = y_i$ for $i = 1,\ldots,n-1$ and $x_n = \psi(y) - Cy_n$ for some constant $C > 0$ such that $D_{y_n} \psi < C$ on $U$.  By replacing $\Omega$ with a smaller open neighborhood of the origin if necessary, we may assume that $y \mapsto (y_1,y_2,\ldots,y_{n-1},\psi(y))$ is a bijection from $U \cup S$ to $\Omega_+ \cup \gamma$ and $y \mapsto (y_1,y_2,\ldots,y_{n-1},\psi(y) - Cy_n)$ is a bijection from $U \cup S$ to $\Omega_- \cup \gamma$.  It is readily computed that 
\begin{align} \label{transform_deriv}
	&\frac{\partial}{\partial x_i} = \frac{\partial}{\partial y_i} - \frac{D_{y_i} \psi}{D_{y_n} \psi} \frac{\partial}{\partial y_n} 
		\text{ for } i = 1,\ldots,n-1, 
	&&\frac{\partial}{\partial x_n} = \frac{1}{D_{y_n} \psi} \frac{\partial}{\partial y_n}, \nonumber \\
	&\frac{\partial w}{\partial x_i} = - \frac{D_{y_i} \psi}{D_{y_n} \psi} \text{ for } i = 1,\ldots,n-1, 
	&&\frac{\partial w}{\partial x_n} = \frac{1}{D_{y_n} \psi}, 
\end{align}
for $x \in \Omega_+ \cup \gamma$ and 
\begin{align} \label{transform_deriv2}
	&\frac{\partial}{\partial x_i} = \frac{\partial}{\partial y_i} - \frac{D_{y_i} \psi}{D_{y_n} \psi - C} \frac{\partial}{\partial y_n} 
		\text{ for } i = 1,\ldots,n-1, 
	&&\frac{\partial}{\partial x_n} = \frac{1}{D_{y_n} \psi - C} \frac{\partial}{\partial y_n}, 
\end{align}
for $x \in \Omega_- \cup \gamma$.  Let 
\begin{align*}
	\phi_k(y) &= u_k(y_1,y_2,\ldots,y_{n-1},\psi(y)) \hspace{13.8mm} \text{on } U \cup S \text{ for } k = 2,3,\ldots,s, \\
	\phi_k(y) &= u_k(y_1,y_2,\ldots,y_{n-1},\psi(y)-Cy_n) \text{ on } U \cup S \text{ for } k = s+1,s+2,\ldots,q. 
\end{align*}
By (\ref{mse}), $\psi, \phi_2. \phi_3, \ldots, \phi_q$ satisfy equations of the form 
\begin{align} \label{mse_trans} 
	\sum_{i=1}^n D_{y_i} F_1^i(y, \psi, \phi_2, D\psi, D\phi_2) + F_1^0(y, \psi, \phi_2, D\psi, D\phi_2) &= 0 \text{ in } U, \nonumber \\
	\sum_{i=1}^n D_{y_i} F_k^i(y, \phi_k, D\psi, D\phi_k) + F_k^0(y, \phi_k, D\psi, D\phi_k) &= 0 \text{ in } U \text{ for } k = 2,3,\ldots,q, 
\end{align}
for some smooth (real analytic) functions $F_k^i$, $i = 0,1,2,\ldots,n$ and $k = 1,2,\ldots,q$.  By (\ref{coincide}), 
\begin{equation} \label{coincide_trans}
	\phi_2 = \phi_3 = \phi_4 = \cdots = \phi_q \text{ on } S. 
\end{equation}
By (\ref{tangents2}), $\psi, \phi_2. \phi_3, \ldots, \phi_q$ satisfy equations of the form 
\begin{align} \label{tangents_trans}
	\Phi_1(y,\psi,\phi_2,\phi_3,\ldots,\phi_q,D\psi,D\phi_2,D\phi_3,\ldots,D\phi_q) &= 0 \text{ on } S, \nonumber \\
	\Phi_2(y,\psi,\phi_2,\phi_3,\ldots,\phi_q,D\psi,D\phi_2,D\phi_3,\ldots,D\phi_q) &= 0 \text{ on } S, 
\end{align}
for some smooth (real analytic) functions $\Phi_1$ and $\Phi_2$. 

Consider the general differential system in functions $v_1,v_2\ldots,v_q$ of the form 
\begin{align} \label{system}
	\sum_{|\alpha| \leq s_k-l} D^{\alpha} F_k^{\alpha}(y,\{D^{\beta} v_j\}_{j=1,\ldots,q,|\beta| \leq t_j+l}) &= 0 
		\text{ weakly in } U \text{ for } k = 1,2,\ldots,q \text{ such that } s_k > l, \nonumber \\
	F_k(y,\{D^{\beta} v_j\}_{j=1,\ldots,q,|\beta| \leq s_k+t_j}) &= 0 
		\text{ in } U \text{ for } k = 1,2,\ldots,q \text{ such that } s_k \leq l, \nonumber \\
	\Phi_r(y,\{D^{\kappa} v_j\}_{j=1,\ldots,q,|\kappa| \leq t_j+r_h}) &= 0 \text{ on } S \text{ for } h = 1,2,\ldots,m, 
\end{align}
where $F_k^{\alpha}$, $F_k$, and $\Phi_r$ are smooth real-valued functions, $l \leq 0$ is an integer, and $s_1,\ldots,s_q$, $t_1,\ldots,t_q$, and $r_1,\ldots,r_m$ are integer weights such that $\max_k s_k = 0$, $\min_j t_j \geq -l$, $\min_{k,j} (s_k+t_j) \geq 0$, and $\min_{j,h} (t_j+r_h) \geq 0$.  The linearization of (\ref{system}) is the linear system in functions $\bar v_1,\ldots,\bar v_q$ given by  
\begin{align*}
	&\sum_{j=1}^N \sum_{|\alpha| \leq s_k-l} \sum_{|\beta| \leq t_j+l} D^{\alpha} (a_{kj}^{\alpha \beta}(y) D^{\beta} \bar v_j) 
		= \left. \frac{d}{dt} \sum_{|\alpha| \leq s_k-l} D^{\alpha} F_k^{\alpha}(y,\{D^{\beta} v_j + t D^{\beta} \bar v_j 
		\}_{j=1,\ldots,q,|\beta| \leq t_j+l}) \right|_{t=0} \\ &\hspace{62mm} = 0 \text{ in } U \text{ if } s_k > l, \\
	&\sum_{j=1}^N \sum_{|\beta| \leq s_k+t_j} a_{kj}^{\beta}(y) D^{\beta} \bar v_j 
		= \left. \frac{d}{dt} F_k(y,\{D^{\beta} v_j + t D^{\beta} \bar v_j \}_{j=1,\ldots,q,|\beta| \leq s_k+t_j-|\alpha|}) \right|_{t=0} = 0 
		\text{ in } U \text{ if } s_k \leq l, \\
	&\sum_{j=1}^N \sum_{|\kappa| \leq t_j+r_h} b_{hj}^{\kappa}(y) D^{\kappa} \bar v_j 
		= \left. \frac{d}{dt} \Phi_r(y,\{D^{\kappa} v_j + t D^{\kappa} \bar v_j\}_{j=1,\ldots,q,|\kappa| \leq t_j+r_h}) \right|_{t=0} = 0 \text{ on } S, 
\end{align*}
for $k = 1,2,\ldots,q$ and $h = 1,2,\ldots,m$, where $a_{kj}^{\alpha \beta}$ and $a_{kj}^{\beta}$ are real-valued functions on $U$ and $b_{rj}^{\kappa}$ are real-valued functions on $S$.  Let 
\begin{align*}
	L'_{kj}(y,D) &= \sum_{|\alpha| = s_k-l} \sum_{|\beta| = t_j-l} a_{kj}^{\alpha \beta}(y) D^{\alpha+\beta} 
		\text{ for } y \in U, \, k = 1,2,\ldots,q \text{ such that } s_k > l, \\
	L'_{kj}(y,D) &= \sum_{|\beta| = s_k+t_j} a_{kj}^{\beta}(y) D^{\alpha+\beta} 
		\text{ for } y \in U, \, k = 1,2,\ldots,q \text{ such that } s_k \leq l, \\
	B'_{hj}(y,D) &= \sum_{|\kappa| = t_j+r_h} b_{hj}^{\kappa}(y) D^{\kappa} \text{ for } y \in S, \, h = 1,2,\ldots,m, 
\end{align*}
for $j = 1,2,\ldots,N$ so that $L'_{kj}(y,D) \bar v_j$ and $B'_{hj}(y,D) \bar v_j$ are the principle parts of the linearization of (\ref{system}).  We say (\ref{system}) is \textit{elliptic} at $y = y_0$ if the linear system 
\begin{equation*}
	L'_{kj}(y_0,D) \bar v_j = 0 \text{ in } \mathbb{R}^n \text{ for } k = 1,2,\ldots,q 
\end{equation*}
has no nontrivial complex-valued solutions of the form $\bar v_j = c_j e^{i\xi \cdot y}$ for some $\xi \in \mathbb{R}^n \setminus \{0\}$ and $c_j \in \mathbb{C}$ for $j = 1,2,\ldots,q$.  Assuming (\ref{system}) is elliptic at the $y = y_0$, we say (\ref{system}) is \textit{coercive} or satisfies the \textit{complementing condition} at $y = y_0$ if $2m = \sum_{j=1}^q s_j + \sum_{k=1}^q t_k$ and the system 
\begin{align*}
	L'_{kj}(y_0,D) \bar v_j &= 0 \text{ in } \{y : y_n > 0\} \text{ for } k = 1,2,\ldots,q, \\
	B'_{hj}(y_0,D) \bar v_j &= 0 \text{ on } \{y : y_n = 0\} \text{ for } h = 1,2,\ldots,m, 
\end{align*}
has no nontrivial complex-valued solutions of the form $\bar v_j = c_j e^{i\xi' \cdot y' - \lambda_j y_n}$ for some $\xi' \in \mathbb{R}^{n-1}$, $c_j \in \mathbb{C}$, $\lambda_j \in \mathbb{C}$ with $\op{Re} \lambda_j > 0$ for $j = 1,\ldots,q$, where $y' = (y_1,\ldots,y_{n-1})$. 

Now consider the differential system given by (\ref{mse_trans}), (\ref{tangents_trans}), and 
\begin{equation} \label{coincide_trans2}
	\Phi_h(y,\psi,\phi_2,\phi_3,\ldots,\phi_q) \equiv \phi_h - \phi_2 = 0 \text{ on } S \text{ for } h = 3,\ldots,q
\end{equation} 
with weights $s_k = 0$ for all $k = 1,\ldots,q$, $t_j = 2$ for all $j = 1,\ldots,q$, $r_1 = r_2 = -1$, and $r_h = -2$ for $h = 3,\ldots,q$.  To complete the proof of Theorem \ref{mainthm} we must show this differential system is elliptic and coercive at the origin.  Having shown that, as was remarked in~\cite{KNS}, one can establish a Schauder estimate for linear systems of the form (\ref{system}) analogous to Lemma 9.1 of~\cite{ADK1} by a similar proof using ideas from~\cite{ADK2} and then apply the Schauder estimate in a standard difference quotient argument to show that $\psi, \phi_2, \phi_3,\ldots, \phi_q$ are $C^{2,\mu}$ functions up to the boundary on a relatively open neighborhood of the origin $U \cup S$.  Thus we can apply Theorem 6.8.2 of~\cite{Morrey} to conclude that if $(N,g)$ is a smooth (real analytic) Riemannian manifold then $\psi, \phi_2, \phi_3,\ldots, \phi_q$ are smooth (real analytic) functions up to the boundary $S$ on a relatively open neighborhood of the origin in $U \cup S$.  It follows that $u_1,\ldots,u_s$ are smooth (real analytic) up to the boundary $\gamma$ on a relatively open neighborhood of the origin in $\Omega_+ \cup \gamma$, $u_{s+1},u_{s+2},\ldots,u_q$ are smooth (real analytic) up to the boundary $\gamma$ on a relatively open neighborhood of the origin in $\Omega_- \cup \gamma$, and $\Gamma = \{(y',\psi(y',0),\phi_2(y',0)) : y' = (y_1,\ldots,y_{n-1}) \in S \}$ is a smooth (real analytic) $(n-1)$-dimensional submanifold near the origin. 

Let $a_k = D_{x_n} u_k(0)$ for $k = 1,2,\ldots,q$.  By (\ref{gradatorigin}), (\ref{transform_deriv}), and (\ref{transform_deriv2}), 
\begin{gather} \label{gradatorigin_trans}
	D_{y_i} \psi(0) = D_{y_i} \phi_2(0) = D_{y_i} \phi_3(0) = \cdots = D_{y_i} \phi_q(0) = 0 \text{ for } i = 1,2,\ldots,n-1, \\
	a_1 - a_2 = \frac{1}{D_{y_n} \psi(0)}, \hspace{4mm} 
	a_k = \frac{D_{y_n} \phi_k(0)}{D_{y_n} \psi(0)} \text{ if } k = 2,3,\ldots,s, \hspace{4mm} 
	a_k = \frac{D_{y_n} \phi_k(0)}{D_{y_n} \psi(0) - C} \text{ if } k = s+1,\ldots,q.  \nonumber 
\end{gather}
We want to linearize and take the principle part of (\ref{mse_trans}) at the origin.  Consider the equation for $k = 2$ in (\ref{mse_trans}).  We can rewrite the minimal surface equation for $u_2$ from (\ref{mse}) as 
\begin{align*}
	&\sum_{i,j=1}^n \sqrt{\det G(x,u_2,Du_2)} \, G^{i,j}(x,u_2,Du_2) g_{n+1,n+1}(x,u_2) D_{x_i x_j} u_2 
	\\&+ \frac{1}{2} \sum_{i,j,k,l=1}^n \sqrt{\det G(x,u_2,Du_2)} (G^{i,j}(x,u_2,Du_2) G^{k,l}(x,u_2,Du_2) - 2G^{i,k}(x,u_2,Du_2) G^{l,j}(x,u_2,Du_2)) 
		\\&\cdot (g_{k,n+1}(x,u_2) D_{x_i x_l} u_2 + g_{l,n+1}(x,u_2) D_{x_i x_k} u_2 + g_{n+1,n+1}(x,u_2) D_{x_k} u_2 D_{x_i x_l} u_2 
		\\&+ g_{n+1,n+1}(x,u_2) D_{x_l} u_2 D_{x_i x_k} u_2) (g_{n+1,n+1}(x,u_2) D_{x_j} u_2 + g_{j,n+1}(x,u_2)) + R(x,u_2,Du_2) = 0 \text{ in } \Omega_+
\end{align*}
for some function $R(x,z,p)$, using the fact that $u_2 \in C^{\infty}(\Omega_+)$.  By (\ref{gradatorigin}), $g_{ii}(0,u_2(0)) = 1$ for $i = 1,2,\ldots,n+1$, $g_{ij}(0,u_2(0)) = 0$ if $i \neq j$, $G_{i,i}(0,u_2(0),Du_2(0)) = 1$ for $i = 1,2,\ldots,n-1$, $G_{n,n}(0,u_2(0),Du_2(0)) = 1+a_2^2$, and $G_{i,j}(0,u_2(0),Du_2(0)) = 0$ for $i \neq j$.  Thus linearizing and taking the principle part of the equation for $k = 2$ in (\ref{mse_trans}) yields 
\begin{equation} \label{mse_lin0}
	(1+a_2^2) \sum_{i=1}^{n-1} \overline{D_{x_i x_i} u_2} + \overline{D_{x_n x_n} u_2} = 0 \text{ on } \{ y : y_n > 0 \}, 
\end{equation}
where for $i = 1,2,\ldots,n$ we let $\overline{D_{x_i x_i} u_2}$ denote the result of rewriting $D_{x_i x_i} u_2$ as a function of $y$ and then computing its linearization and second order principle part at the origin.  By (\ref{transform_deriv}) and (\ref{gradatorigin_trans}), 
\begin{align*}
	\overline{D_{x_i x_i} u_2} &= D_{y_i y_i} \bar \phi_2 - \frac{D_{y_n} \phi_2(0)}{D_{y_n} \psi(0)} D_{y_n y_n} \bar \psi 
		= D_{y_i y_i} (\bar \phi_2 - a_2 \bar \psi) \\
	\overline{D_{x_n x_n} u_2} &= \frac{1}{D_{y_n} \psi(0)^2} D_{y_n y_n} \bar \phi_2 - \frac{D_{y_n} \phi_2(0)}{D_{y_n} \psi(0)^3} D_{y_n y_n} \bar \psi 
		= (a_1-a_2)^2 D_{y_n y_n} (\bar \phi_2 - a_2 \bar \psi)
\end{align*}
for functions $\bar \psi$ and $\bar \phi_2$, so we can write (\ref{mse_lin0}) as 
\begin{equation*}
	(1+a_2^2) \sum_{i=1}^{n-1} D_{y_i y_i} (\bar \phi_2 - a_2 \bar \psi) + (a_1 - a_2)^2 D_{y_n y_n} (\bar \phi_2 - a_2 \bar \psi) = 0 
	\text{ in } \{y : y_n > 0\}. 
\end{equation*}
By similar computations, we can linearize and take the principle part of the equations in (\ref{mse_trans}) for every $k \in \{1,2,\ldots,q\}$ using (\ref{gradatorigin}), (\ref{transform_deriv}), (\ref{transform_deriv2}), and (\ref{gradatorigin_trans}) to obtain the differential system in $\bar \psi, \bar \phi_2, \bar \phi_3, \ldots, \bar \phi_q$ of 
\begin{align} \label{mse_lin}
	(1+a_1^2) \sum_{i=1}^{n-1} D_{y_i y_i} (\bar \phi_2 - a_1 \bar \psi) + (a_1 - a_2)^2 D_{y_n y_n} (\bar \phi_2 - a_1 \bar \psi) &= 0, \\
	(1+a_k^2) \sum_{i=1}^{n-1} D_{y_i y_i} (\bar \phi_k - a_k \bar \psi) + (a_1 - a_2)^2 D_{y_n y_n} (\bar \phi_k - a_k \bar \psi) &= 0 
		\text{ for } k = 2,3,\ldots,s, \nonumber \\
	(1+a_k^2) \sum_{i=1}^{n-1} D_{y_i y_i} (\bar \phi_k - a_k \bar \psi) + \left( \frac{a_1 - a_2}{1 - C(a_1 - a_2)} \right)^2 
		D_{y_n y_n} (\bar \phi_k - a_k \bar \psi) &= 0 \text{ for } k = s+1,\ldots,q, \nonumber
\end{align}
in $\{y : y_n > 0\}$, which is obviously an elliptic system in $\bar \phi_2 - a_1 \bar \psi$ and $\bar \phi_k - a_k \bar \psi$ for $k = 2,3,\ldots,q$.  To check coercivity, it suffices to consider solutions to (\ref{mse_lin}) of the form 
\begin{equation} \label{barpsi_eqn}
	\bar \phi_2 - a_1 \bar \psi = c_1 e^{i\xi' \cdot y' - \lambda_1 y_n}, \hspace{10mm} 
	\bar \phi_k - a_k \bar \psi = c_k e^{i\xi' \cdot y' - \lambda_k y_n} \text{ for } k = 2,3,\ldots,q, 
\end{equation}
where $\xi' \in \mathbb{R}^{n-1}$, $c_k \in \mathbb{C}$, and $\lambda_k \in \mathbb{C}$ with $\op{Re} \lambda_k > 0$ for $k = 1,2,\ldots,q$.  By (\ref{mse_lin}), 
\begin{align} \label{nu_eqn}
	\lambda_k &= \frac{\sqrt{1+a_k^2} |\xi'|}{a_1-a_2} \text{ for } k = 1,2,\ldots,s, \nonumber \\
	\lambda_k &= \frac{(C(a_1 - a_2)-1) \sqrt{1+a_k^2} |\xi'|}{a_1-a_2} \text{ for } k = s+1,\ldots,q. 
\end{align}
Since we assume $\lambda_k > 0$ for all $k = 1,2,\ldots,q$, $\xi' \neq 0$.  The linearization of (\ref{coincide_trans}) simply yields
\begin{equation} \label{coincide_lin} 
	\bar \phi_2 = \bar \phi_3 = \bar \phi_4 = \cdots = \bar \phi_q \text{ on } \{ y : y_n = 0 \}. 
\end{equation}
By substituting (\ref{barpsi_eqn}) in (\ref{coincide_lin}) and solving for $c_k$ in terms of $c_1$ and $c_2$ for $k = 3,4,\ldots,q$, we obtain 
\begin{equation} \label{coincide_lin2}
	c_k = \frac{a_k - a_2}{a_1 - a_2} c_1 + \frac{a_1 - a_k}{a_1 - a_2} c_2 \text{ for } k = 1,2,\ldots,q, 
\end{equation}
which we note also holds for $k = 1,2$ trivially.  Next we want to linearize and take the principle part of (\ref{tangents_trans}) at the origin.  By (\ref{defn_chi}) and the fact that $g_{ii}(0,u_k(0)) = 1$ for $i = 1,2,\ldots,q$ and $g_{ij}(0,u_k(0)) = 0$ for $i \neq j$ for all $k = 1,2,\ldots,q$, $\chi(0,u_k(0),Du_k(0)) = (0,0,\ldots,0,-a_k,1)$ for $k = 1,2,\ldots,q$ and the result of rewriting $\chi(x,u_k,Du_k)$ as a function of $y$ and computing the first order principle part of its linearization at the origin is $(-\overline{D_{x_1} u_k}, -\overline{D_{x_2} u_k},\ldots, -\overline{D_{x_n} u_k}, 0)$, where for $k = 1,2,\ldots,q$ and $i = 1,2,\ldots,n$ we let $\overline{D_{x_i} u_k}$ denote the result of rewriting $D_{x_i} u_k$ as a function of $y$ and then computing the first order principle part of its linearization at the origin.  Hence the linearizing and taking the principle part of (\ref{tangents2}) is 
\begin{align} \label{tangents_lin0} 
	&\sum_{k=1}^s \theta_k \frac{a_k \overline{D_{x_n} u_k}}{(1+a_k^2)^{3/2}} - \sum_{k=s+1}^q \theta_k \frac{a_k \overline{D_{x_n} u_k}}{(1+a_k^2)^{3/2}} = 0, 
		\nonumber \\
	&\sum_{k=1}^s \theta_k \frac{\overline{D_{x_n} u_k}}{(1+a_k^2)^{3/2}} - \sum_{k=s+1}^q \theta_k \frac{\overline{D_{x_n} u_k}}{(1+a_k^2)^{3/2}} = 0, 
\end{align}
on $\{ y : y_n = 0 \}$.  By (\ref{transform_deriv}), (\ref{transform_deriv2}), and (\ref{gradatorigin_trans}), 
\begin{align} \label{tangents_lin0b} 
	\overline{D_{x_n} u_1} &= \frac{1}{D_{y_n} \psi(0)} D_{y_n} \bar \phi_2 - \frac{1+D_{y_n} \phi_2(0)}{D_{y_n} \psi(0)^2} D_{y_n} \bar \psi 
		= (a_1-a_2) D_{y_n} (\bar \phi_2 - a_1 \bar \psi), \nonumber \\
	\overline{D_{x_n} u_k} &= \frac{1}{D_{y_n} \psi(0)} D_{y_n} \bar \phi_k - \frac{D_{y_n} \phi_k(0)}{D_{y_n} \psi(0)^2} D_{y_n} \bar \psi 
		= (a_1-a_2) D_{y_n} (\bar \phi_k - a_k \bar \psi) \text{ for } k = 2,3,\ldots,s, \nonumber \\
	\overline{D_{x_n} u_k} &= \frac{1}{D_{y_n} \psi(0)-C} D_{y_n} \bar \phi_k - \frac{D_{y_n} \phi_k(0)}{(D_{y_n} \psi(0)-C)^2} D_{y_n} \bar \psi 
		 \nonumber \\&= \frac{a_1-a_2}{1-C(a_1-a_2)} D_{y_n} (\bar \phi_k - a_k \bar \psi) \text{ for } k = s+1,s+2,\ldots,q.
\end{align}
Combining (\ref{tangents_lin0}) and (\ref{tangents_lin0b}) yields 
\begin{align} \label{tangents_lin}
	\theta_1 \frac{a_1 D_{y_n} (\bar \phi_2 - a_1 \bar \psi)}{(1+a_1^2)^{3/2}} 
		+ \sum_{k=2}^s \theta_k \frac{a_k D_{y_n} (\bar \phi_k - a_k \bar \psi)}{(1+a_k^2)^{3/2}} 
		- \sum_{k=s+1}^q \theta_k \frac{a_k D_{y_n} (\bar \phi_k - a_k \bar \psi)}{(1 - C(a_1-a_2)) (1+a_k^2)^{3/2}} &= 0, \nonumber \\
	\theta_1 \frac{D_{y_n} (\bar \phi_2 - a_1 \bar \psi)}{(1+a_1^2)^{3/2}} 
		+ \sum_{k=2}^s \theta_k \frac{D_{y_n} (\bar \phi_k - a_k \bar \psi)}{(1+a_k^2)^{3/2}} 
		- \sum_{k=s+1}^q \theta_k \frac{D_{y_n} (\bar \phi_k - a_k \bar \psi)}{(1 - C(a_1-a_2)) (1+a_k^2)^{3/2}} &= 0, 
\end{align}
on $\{ y : y_n = 0 \}$.  Using the fact that $D_{y_n} (\bar \phi_2 - a_1 \bar \psi) = -c_1 \lambda_1$ and $D_{y_n} (\bar \phi_k - a_k \bar \psi) = -c_k \lambda_k$ for $k = 2,3,\ldots,q$ by (\ref{barpsi_eqn}), where $\lambda_k$ are given by (\ref{nu_eqn}), we can rewrite (\ref{tangents_lin}) as 
\begin{equation*}
	\sum_{k=1}^q \frac{\theta_k a_k c_k}{1+a_k^2} = 0, \hspace{10mm} 
	\sum_{k=1}^q \frac{\theta_k c_k}{1+a_k^2} = 0. 
\end{equation*}
By (\ref{coincide_lin2}), 
\begin{align} \label{tangents_lin3}
	&\sum_{k=1}^q \frac{\theta_k a_k}{1+a_k^2} ((a_k - a_2) c_1 + (a_1 - a_k) c_2) = 0, \nonumber \\
	&\sum_{k=1}^q \frac{\theta_k}{1+a_k^2} ((a_k - a_2) c_1 + (a_1 - a_k) c_2) = 0. 
\end{align}
(\ref{tangents_lin3}) is a linear system of two equations with $c_1$ and $c_2$ as unknowns whose determinant is 
\begin{equation*}
	D = (a_1 - a_2) \left( \left( \sum_{k=1}^q \frac{\theta_k}{1+a_k^2} \right) \left( \sum_{k=1}^q \frac{\theta_k a_k^2}{1+a_k^2} \right) 
		- \left( \sum_{k=1}^q \frac{\theta_k a_k}{1+a_k^2} \right)^2 \right) .
\end{equation*}
By Cauchy-Schwartz $D \geq 0$ with $D = 0$ only if $a_1 = a_2 = \cdots = a_q$.  Since $a_1 = a_2 = \cdots = a_q$ contradicts (\ref{twoplaneassumption}), $D > 0$.  Hence (\ref{tangents_lin3}) implies that $c_1 = c_2 = 0$ and thus by (\ref{coincide_lin2}) $c_k = 0$ for all $k = 1,2,\ldots,q$.  Therefore the system (\ref{mse_lin}), (\ref{coincide_lin}), and (\ref{tangents_lin}) is coercive in $\bar \phi_2 - a_1 \bar \psi$ and $\bar \phi_k - a_k \bar \psi$ for $k = 2,3,\ldots,q$.  Consequently the differential system given by (\ref{mse_trans}), (\ref{tangents_trans}), and (\ref{coincide_trans2}) is elliptic and coercive at the origin and Theorem \ref{mainthm} follows.

\section{Appendix: Proof of Remark \ref{remark2}} \label{sec:appendix}

\begin{lemma} \label{C1toC1mu}
Let $(N,g)$ be an $(n+1)$-dimensional, smooth (real analytic), Riemannian manifold and $\mu \in (0,1/2)$.  Let $Z \in N$, $\mathcal{O}$ be an open neighborhood of $Z$ in $N$, and $\mu \in (0,1)$.  Let $q \geq 3$ and consider the stationary $n$-varifold $V$ of the form (\ref{V_form}) for distinct $C^1$ embedded hypersurfaces-with-boundary $M_1,M_2,\ldots,M_q$ of $\mathcal{O}$ that have a common boundary $\Gamma$ with $Z \in \Gamma$ and for positive integers $\theta_1,\theta_2,\ldots,\theta_q$.  Assume the hypersurfaces $M_k$ are not all tangent to the same hyperplane at $Z$.  Then for some open neighborhood $\mathcal{O}'$ of $Z$ in $\mathcal{O}$, $M_k$ are $C^{1,\mu}$ hypersurfaces-with-boundary of $\mathcal{O}'$ and $\Gamma$ is a $C^{1,\mu}$ $(n-1)$-dimensional submanifold of $\mathcal{O}'$. 
\end{lemma}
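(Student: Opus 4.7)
The plan is to establish a quantitative $L^2$ excess-decay for $V$ at boundary points near $Z$ and iterate it, using the same linearized free-boundary problem that will appear in the proof of Theorem \ref{mainthm}. This is the analogue in the present setting of the excess-decay iteration underlying the Minimum Distance Theorem of \cite{Wic}, and does not require stability of $V$ because the relevant classical cones (three or more half-hyperplanes meeting transversally along an axis) are already regular and the linearization turns out to be coercive.

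First I would set up the tangent cone picture. Because each $M_k$ is $C^1$ up to $\Gamma$, at each $X \in \Gamma$ close to $Z$ the unique tangent cone of $V$ is the classical cone $C_X = \sum_{k=1}^q \theta_k |H_k(X)|$ with $H_k(X) = T_X M_k$ meeting along the axis $L_X = T_X \Gamma$; its stationarity is the balance condition (\ref{mainthm_tangents}) at $X$. The non-tangency at $Z$ propagates to nearby $X$ by continuity of the tangent planes, and on a small neighborhood each $M_k$ is the graph of a $C^1$ function $u_k^X$ over $H_k(X)$ whose gradient tends to zero as the radius shrinks.

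Next I would run a blow-up contradiction argument. For constants $\theta, \mu \in (0,1/2)$ to be fixed, suppose the scale-invariant $L^2$-height excess of $V$ relative to the best-approximating classical cone failed to improve from $E_j$ at scale $\rho_j$ to $\theta^{2(1+\mu)} E_j$ at scale $\theta \rho_j$, for some $X_j \in \Gamma$ converging to $Z$ and $\rho_j \downarrow 0$. Rescaling $V$ by $1/\rho_j$ about $X_j$, writing the sheets as graphs $u_k^{(j)}$ over the half-planes of the best-approximating cone $C_j$, normalising by $E_j^{1/2}$, and extracting a subsequential weak limit $\bar u_k$, a standard combination of cut-offs in the first-variation identity, monotonicity-formula height bounds, and uniform Lipschitz estimates on the normalised graphs shows that $\bar u_k$ weakly solves, on each open half-space, the constant-coefficient linearized minimal-surface equations (\ref{mse_lin}); the common-boundary identity (\ref{coincide}) and the balance condition (\ref{mainthm_tangents}) pass to the limit as the linear boundary conditions (\ref{coincide_lin}) and (\ref{tangents_lin}). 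This linear boundary-value problem is precisely the one whose ellipticity and coercivity at the origin are verified in the proof of Theorem \ref{mainthm} — the non-tangency assumption being exactly what forces the determinant $D$ appearing there to be strictly positive — so $\bar u_k$ is smooth up to the axis and, on any smaller ball, decomposes into a linear piece corresponding to an infinitesimal rotation or translation of $C_j$ plus a higher-order remainder. This produces the missing improvement by factor $\theta^{2(1+\mu)}$, contradicting the assumption. A standard Campanato iteration then upgrades $u_k^X$ to $C^{1,\mu}$ uniformly in $X$ near $Z$, so $M_k$ is a $C^{1,\mu}$ hypersurface-with-boundary near $Z$ and $\Gamma$ is a $C^{1,\mu}$ $(n-1)$-submanifold for every $\mu \in (0,1/2)$.

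The main obstacle is justifying the blow-up limit, specifically that the linearized balance condition (\ref{tangents_lin}) truly holds for $\bar u_k$ along the axis. This requires applying $\delta V(\zeta) = 0$ to vector fields concentrated in a shrinking neighborhood of the axis and controlling the error terms using only the a priori $C^1$ graphicality of the sheets together with the monotonicity formula; one cannot use any $C^{1,\mu}$ bound, since the point of the lemma is to produce one. This is the analogue in the present setting of the delicate blow-up step in the proof of Wickramasekera's Minimum Distance Theorem and is where the bulk of the technical work lies; once it is in place, the remaining ingredients (the coercivity of the linear limit and the Campanato iteration) are routine.
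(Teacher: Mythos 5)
Your overall strategy (a Simon-type $L^2$ excess-decay iteration with a blow-up/contradiction step at each scale) is the right family of ideas, but it is not what the paper does, and as written it leaves the decisive step open. The paper does not rerun the blow-up argument: it reduces the lemma to the proof of the Minimum Distance Theorem of~\cite{Wic} (Theorem 18.2 there), applied to the rescaled varifold $\widehat{V}$ near the tangent cone $C_0 = \sum_k \theta_k |H_k^{(0)}|$, and then identifies exactly the two places where the hypotheses of~\cite{Wic} that $\widehat{V}$ might fail (membership in $\mathcal{S}_{\alpha}^{\star}$, in particular stability and the $\alpha$-structure condition) actually enter, and why each can be bypassed here: (a) stability is used in~\cite{Wic} only to produce the concentration of density-$\geq q$ points near the axis (Lemma 16.5(a) of~\cite{Wic}), which in the present setting follows directly from $\widehat{\Gamma}$ being $C^1$-close to $\{0\}\times B^{n-1}_1(0)$; and (b) the a priori $L^2$ estimates of Theorem 16.2 of~\cite{Wic} use $\mathcal{S}_{\alpha}^{\star}$ to get the graphical decomposition of $\widehat{V}$ over the cone in toric regions $T_{\rho,1}(\zeta)$ of small excess, whereas here small excess forces $\widehat{\Gamma} \cap T_{\rho,7/8}(\zeta) = \emptyset$ (by $C^1$-closeness of the sheets to the half-hyperplanes), so Allard regularity supplies the graph representation.

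The genuine gap in your proposal is precisely the step you flag at the end and then set aside. Two points in particular. First, the "uniform Lipschitz estimates on the normalised graphs" and the graphical decomposition over the best-approximating cone $C_j$ at each scale, with bounds in terms of the excess $E_j$, are not consequences of the qualitative $C^1$ hypothesis; they are the content of the Simon/Wickramasekera a priori $L^2$ estimates, and obtaining them without the structural hypotheses of~\cite{Wic} is exactly the modification the paper has to make. Second, you cannot use the ADN coercivity of the linearized system (\ref{mse_lin}), (\ref{coincide_lin}), (\ref{tangents_lin}) to conclude that the blow-up limit is smooth up to the axis: coercivity yields Schauder/regularity estimates for solutions already known to satisfy the boundary conditions in a classical (e.g.\ $C^{2,\mu}$ up to the boundary) sense, which is what is used in the proof of Theorem~\ref{mainthm} after $C^{1,\mu}$ regularity is in hand. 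A blow-up limit produced from an $L^2$-normalized sequence satisfies the boundary conditions a priori only in a weak trace sense, and upgrading this (non-concentration of excess near the axis, convergence of the first-variation identity applied to vector fields supported near the axis) is the delicate part of~\cite{SimonCTC} and~\cite{Wic}. Until that step is either carried out or reduced to the existing estimates of~\cite{Wic} as the paper does, the argument is not complete.
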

\begin{proof}
Lemma \ref{C1toC1mu} follows from the proof of the Minimum Distance Theorem of~\cite{Wic}.  The tangent varifold to $V$ at $Z$ is 
\begin{equation*}
	C_0 = \sum_{k=1}^q \theta_k |H_k^{(0)}|,
\end{equation*}
where $H_k^{(0)}$ are the tangent half-hyperplanes to $M_k$ at $Z$.  Identify $T_Z N$ with $\mathbb{R}^{n+1}$ via a linear isometry and assume $T_Z \Gamma = \{0\} \times \mathbb{R}^{n-1}$.  Let $\exp_Z : T_Z N \rightarrow N$ denote the exponential map of $N$ at $Z$ and $\mathcal{N}_{\sigma}(Z)$ denote the normal neighborhood of $Z$ in $N$ of radius $\sigma > 0$.  Let $\eta_{\sigma}(X) = X/\sigma$ for $X \in \mathbb{R}^{n+1}$ and $\sigma > 0$.  Choose $\sigma > 0$ such that $\sigma$ is less than the injectivity radius of $N$ at $Z$, $\mathcal{N}_{\sigma}(Z) \subset \mathcal{O}$, $\widehat{M_k} = \eta_{\sigma}(\exp_Z^{-1}(M_k \cap \mathcal{N}_{\sigma}(Z)))$ is $C^1$ close to $H_k^{(0)} \cap B^{n+1}_1(0)$ for $k = 1,2,\ldots,q$, and $\widehat{\Gamma} = \eta_{\sigma}(\exp_Z^{-1}(\Gamma \cap \mathcal{N}_{\sigma}(Z)))$ is $C^1$ close to $\{0\} \times B^{n-1}_1(0)$.  Let $\widehat{V} = \eta_{\sigma \#} \exp_{Z \#}^{-1} (V \llcorner \mathcal{N}_{\sigma}(Z))$.  For a small enough choice of $\sigma$, all the hypotheses of the Minimum Distance Theorem~\cite[Theorem 18.2]{Wic} hold true with $\widehat{V}$ in place of $V$, $C_0$, and $\alpha = \mu$ except $\widehat{V}$ might not belong to $\mathcal{S}_{\alpha}^{\star}$.  In particular, the $\alpha$-structure condition $(\mathcal{S}3)$ fails.  It is unnecessary to assume the interior of each $M_i$ is stable since assuming $\widehat{\Gamma}$ is close to $\{0\} \times B^{n-1}_1(0)$ implies Lemma 16.5(a) of~\cite{Wic} is true, i.e. that there is a high concentration of points of density $\geq q$ near $\{0\} \times B^{n-1}_{1/2}(0)$.  We need to modify the proof of Theorem 16.2 of~\cite{Wic}, which establishes a priori $L^2$ estimates as in~\cite{SimonCTC}, since the proof assumes $\widehat{V} \in \mathcal{S}_{\alpha}^{\star}$.  Let 
\begin{equation*}
	C = \sum_{k=1}^q \sum_{l=1}^{\theta_k} |H_{k,l}|
\end{equation*}
for some half-hyperplaces $H_{k,l}$ that are close to $H_k$ and have boundary $\{0\} \times \mathbb{R}^{n-1}$ and let 
\begin{equation*}
	T_{\kappa,\rho}(\zeta) = \{ (x,y) \in \mathbb{R}^2 \times \mathbb{R}^{n-1} : ||x| - \rho|^2 + |y-\zeta|^2 < \kappa^2 \rho^2/64 \}
\end{equation*}
for $\kappa \in (0,1]$, $\rho \in (0,1/16)$, and $\zeta \in \mathbb{R}^{n-1}$.  The proof of Theorem 16.2 claims that there is a $\delta \in (0,1/2)$ (depending on $C^{(0)}$ and independent of $\widehat{V}$ and $C$) such that if $\rho \in (0,1/16)$ and $y \in \mathbb{R}^{n-1}$ with $\rho^2 + |\zeta|^2 \leq (13/16)^2$, $\op{spt}\|\widehat{V}\| \cap T_{\rho,1/16}(\zeta) \neq \emptyset$, and 
\begin{equation} \label{simon_eqn1}
	\rho^{-n-2} \int_{T_{\rho,1}(\zeta)} \op{dist}((x,y),\op{spt}\|C\|)^2 d\|\widehat{V}\|(x,y) < \delta^2
\end{equation}
then $\widehat{V} \llcorner T_{\rho,1/2}(\zeta)$ can be written as a graph over $\op{spt}\|C\| \cap T_{\rho,1/2}(\zeta)$ of functions with small gradient.  Observe that for our particular choice of $\widehat{V}$, (\ref{simon_eqn1}) implies $\widehat{\Gamma} \cap T_{\rho,7/8}(\zeta) = \emptyset$ since $\widehat{M}_k$ is $C^1$ close to $H_k^{(0)} \cap B^{n+1}_1(0)$ and thus the claim follows by Allard regularity.  The rest of the proof of the Minimum Distance Theorem goes through without change to conclude that each $M_k$ is a $C^{1,\mu}$ hypersurface-with-boundary in some open neighborhood $\mathcal{O}'$ of $Z$.
\end{proof}

\end{document}